\newtheorem{theorem}{Theorem}
\newtheorem{lemma}[theorem]{Lemma}
\newtheorem{prop}[theorem]{Proposition}
\newcommand{\Id}{\mathrm{Id}}
\newcommand{\fracpim}{\frac{\pi i}{m}}
\newcommand{\fractt}{\frac{2\pi i}{3}}
\newcommand{\qb}{{\mathbb C}^2}
\begin{document}

\title[]{Qubit representations of the braid groups from generalized Yang-Baxter matrices}

\author{Jennifer F. Vasquez}
\email{jennifer.vasquez@scranton.edu}
\address{The Mathematics Department\\
The University of Scranton\\
Scranton, PA 18510}

\author{Zhenghan Wang}
\email{zhenghwa@microsoft.com}
\address{Microsoft Station Q and Dept of Mathematics\\
    University of California\\
    Santa Barbara, CA 93106}

\author{Helen M. Wong}
\email{hwong@carleton.edu}
\address{Carleton College\\
Department of Mathematics and Statistics\\
Northfield, MN 55057
}

\thanks{The second author is partially supported by NSF grant DMS-1411212, and the third author by NSF grants DMS-1105692 and DMS-1510453.  The authors thanks Matt Hastings for valuable communications.}

\begin{abstract}
Generalized Yang-Baxter matrices sometimes give rise to braid group representations.  We identify the exact images of some qubit representations of the braid groups from generalized Yang-Baxter matrices obtained from anyons in the metaplectic modular categories.
\end{abstract}
\maketitle

\section{Introduction}

A generalized Yang-Baxter (gYB) matrix is an invertible $8\times 8$ matrix $R: {(\qb)}^{\otimes 3}\rightarrow {(\qb)}^{\otimes 3}$ such that
$$ (R\otimes I)(I\otimes R)(R\otimes I)=(I\otimes R)(R\otimes I)(I\otimes R),$$
where $I$ is the identity operator on $\qb$.  As in quantum information, we will refer to $\qb$ as a {\it qubit}.  This generalization of the Yang-Baxter equation, inspired by quantum information, is proposed in \cite{RZWG}, and referred to as the $(2,3,1)$-generalized in \cite{KW}.  One application of a gYB matrix is to give rise to new representations of the braid groups $\mathcal B_n$ on $(n+1)$-qubits ${(\qb)}^{\otimes (n+1)}$ by sending the standard braid generator $\sigma_i$ to
$$ R_{\sigma_i}=I^{\otimes (i-1)}\otimes R \otimes I^{\otimes (n-i-1)}.$$ 
But $R_{\sigma_i}$'s do not necessarily satisfy the far commutativity relation automatically.  Therefore, we need to check the commutativity in order to have braid group representations from gYB matrices.  We will refer to a braid group representation from a gYB matrix a {\it qubit} braid group representation.

One systematic way to find gYB matrices is to use weakly-integral anyons \cite{KW}.  An interesting class of weakly-integral anyons are those from the metaplectic modular categories related to parfermion zero modes \cite{HNW2}.  In \cite{HNW1}, the authors considered the braid group representations from the anyon types $Y_i$ in the metaplectic modular categories $SO(m)_2, m\geq 3$ odd.  But the authors did not exactly identify the images of the resulting qubit representations of the braid groups.  In this note, we completely identify the images for the case of odd $m$.

The explicit representation matrices can be used as quantum gates to set up quantum computation models.  One particular way would be to allow some qubits in the $\mathcal B_n$ representation spaces to be ancillas.  Since the braid representations have finite images, therefore the braiding gates alone cannot be universal for quantum computation.  It would be interesting to see if we can obtain universality by supplementing braiding gates with measurements as in \cite{CHW,CW}.

\section{Qubit braid group representations and their images}

Let $\mathcal B_n$ be the braid group on $n$ strings, generated by the elementary braids $\sigma_1, \sigma_2, \ldots, \sigma_{n-1}$.    We consider a representation $\rho_{R}: \mathcal B_n \to \mathrm{End} ( (\mathbb C^2)^{\otimes (n+1)}) $ considered in \cite{HNW1}.    We define $\rho_R$  and express it using the standard operators.

\subsection{Definition of the gYB representation $\rho_R$}
  Let $m\geq 3$ be an odd integer. Let $\nu = -1$ if $m = 3$,  and   $\nu = + 1$ if  $m \geq 5$.   Then  $R$  (which was denoted by $R_{Y_1}$ in \cite{HNW1}) is the $8 \times 8$ gYB matrix
\[
\begin{pmatrix}
 \nu \cos( \frac{\pi}{m}) 	& 0 					& i \sin ( \frac{\pi}{m}) 	& 0 \\
0					&   - i \sin( \frac{\pi}{m}) 	& 0 				& \cos ( \frac{\pi}{m}) \\
 i \sin( \frac{\pi}{m}) 		& 0 					& \nu \cos ( \frac \pi m) & 0\\
0					&  \cos( \frac{\pi}{m}) 		& 0 				& -i \sin ( \frac{\pi}{m}) 	 \\
\end{pmatrix}
\oplus
\begin{pmatrix}
- i \sin( \frac{\pi}{m}) 		& 0 					&  \cos ( \frac{\pi}{m}) & 0\\
0					& \nu \cos( \frac{\pi}{m}) 	& 0 				& i \sin ( \frac{\pi}{m}) \\
	\cos( \frac{\pi}{m}) 	& 0 					& -i \sin ( \frac \pi m) 	& 0 \\
0					&   i \sin( \frac{\pi}{m}) 	& 0 				& \nu \cos ( \frac{\pi}{m}) \\
\end{pmatrix}
,
\]
\smallskip
where the $\oplus$ is the block sum of matrices.  Here, we use the lexicographical convention for the order of the eight 3-qubit basis elements.  

Let $n \geq 2$.  The qubit representation $\rho_{R}$ is the representation  of $\mathcal B_n$ on $(n+1)$-qubits such that
\[ \rho_{R}(\sigma_{i}) = I^{\otimes(i-1)} \otimes R \otimes I^{\otimes(n-i-1)} \]
for every $i = 1, \ldots, n-1$ (earlier referred to as $R_{\sigma_{i}}$).
Since $\mathcal B_n$ is generated by the elementary braids $\sigma_1 , \ldots, \sigma_{n-1}$,  this determines the action of $\rho_{R}$ for all elements of $\mathcal B_n$.  The far commutativity can be checked directly, therefore, we have a qubit representation of the braid group.

The matrices $U_{i-1, i, i+1}$ in \cite{HNW1} correspond to our $\rho_R(\sigma_{i-1})$; we follow their convention for the sake of symmetry.  For the remainder of the paper, we take $i = 2, \ldots, n$.    In particular, $\rho_R(\sigma_{i-1})$ acts  on the $(i-1, i, i+1)$-qubits using $R$ and leaves all the others the same.  
 
\subsubsection{Standard gates} 
Let $X_i$ be the Pauli gate that changes the $i$-th qubit.   Let $Z_i$ be the Pauli gate that negates the qubit  if the $i$-th qubit is nonzero.  For example,
\[ X_2(|abc\rangle ) = | a \bar b c\rangle  \; \mbox{  and  } \; Z_1Z_3(|abc\rangle ) = \begin{cases} |abc\rangle & \mbox{ if } a = c\\  -|a b c\rangle & \mbox{ if } a \neq c \end{cases}. \]

Let $\Lambda_{XOR}^2 NOT$ be the $XOR$ controlled 3-qubit gate defined on the 3-qubit $|abc\rangle$:
\[ \Lambda_{XOR}^2 NOT(|abc\rangle)=
\begin{cases} |abc\rangle & \mbox{ if } a = c \\
|a \bar bc\rangle & \mbox{ if } a \neq c \end{cases}. \]

Let $NOT_i$ (or  $NOT_{i-1, i, i+1}$) be the operator $I^{\otimes(i-2)} \otimes \Lambda_{XOR}^2 NOT \otimes I^{\otimes(n-i-2)}$.  In particular, $NOT_i$ is defined for $2 \leq i \leq n$.  It acts like  $\Lambda_{XOR}^2 NOT$  on the consecutive $(i-1, i, i+1)$-qubits and leaves all the others unchanged.  Whereas $Z_{i-1}Z_{i+1}$ negates the qubit iff the $(i-1)$th and $(i+1)$th qubits disagree,  $NOT_i$ reverses the $i$th qubit iff the $(i-1)$th and $(i+1)$th qubits disagree.
 
\bigskip 

Note the following well-known commutativity properties between the Pauli gates and the $NOT_i$ operators.  
\begin{lemma}  \label{COMMidentities}  
\begin{enumerate}
\item $[X_i, X_j] = 0$ $\forall ~i, j$. 
\item $[Z_i, Z_j] = 0$ $\forall ~i, j$.
\item $X_i Z_i = - Z_i X_i$ and  $[X_i,Z_j]=0$ $\forall~ i \neq j$. 
\item $Z_i NOT_i = (Z_{i-1} Z_{i+1}) NOT_i  Z_i $ and $[Z_i,NOT_j]=0$ $\forall ~ i \neq j$.
\item $ NOT_{i}  X_{i-1}  = X_{i-1} X_{i} NOT_{i}$, $NOT_{i} X_{i+1}   =   X_i X_{i+1} NOT_{i} $, and $[NOT_i,X_j]=0$ $\forall ~ j \neq i-1, i+1$.
\end{enumerate}
\end{lemma}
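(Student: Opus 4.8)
The plan is to verify each identity by direct computation on the computational basis $\{\,|x_1x_2\cdots x_{n+1}\rangle : x_k\in\{0,1\}\,\}$. Since all the operators in sight are linear, it suffices to check equality on basis vectors, and the whole argument reduces to tracking how each operator permutes basis vectors and decorates them with signs. The organizing principle is locality of support: $X_i$ and $Z_i$ act nontrivially only on the $i$-th tensor factor, while $NOT_i$ touches only the three consecutive factors $(i-1,i,i+1)$ and, crucially, \emph{flips only the middle one}. Whenever two operators act on disjoint qubits they commute automatically; this immediately settles the $i\neq j$ case of (1), the $i\neq j$ clause of (3), and the distant cases of (4) and (5). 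A small extra remark handles the overlapping instances of the ``far'' commutation claims: in (4), $NOT_j$ leaves the $i$-th qubit unchanged for every $i\neq j$ (it never alters its own control qubits $j\pm1$) and $Z_i$ is diagonal in the computational basis, so $Z_i$ and $NOT_j$ commute even when $i\in\{j-1,j+1\}$; likewise in (5) the case $j=i$ holds because $NOT_i$ and $X_i$ both act on the $i$-th qubit only, neither disturbs the controls $i\pm1$, and two flips of the same coordinate commute. Claim (2) is then immediate since every $Z_i$ is diagonal, and diagonal matrices commute.

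This leaves the three genuinely overlapping identities: the anticommutation in (3) and the two conjugation relations in (4) and (5). For (3), on a basis vector $Z_i$ contributes the scalar $(-1)^{x_i}$ while $X_i$ sends $x_i\mapsto\bar x_i$; comparing $X_iZ_i$ with $Z_iX_i$, the former produces $(-1)^{x_i}$ and the latter $(-1)^{\bar x_i}=-(-1)^{x_i}$, which is exactly the sign $X_iZ_i=-Z_iX_i$. For (4) and (5) I would restrict attention to the three active qubits, writing $(a,b,c)=(x_{i-1},x_i,x_{i+1})$, and then split into the two cases $a=c$ and $a\neq c$, since these are precisely the branches of $NOT_i$. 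For (4), both $Z_iNOT_i$ and $(Z_{i-1}Z_{i+1})NOT_iZ_i$ send $|abc\rangle$ to $(-1)^b|abc\rangle$ when $a=c$ and to $-(-1)^b|a\bar bc\rangle$ when $a\neq c$; on the right-hand side the factor $Z_{i-1}Z_{i+1}$ contributes $(-1)^{a+c}$, which equals $+1$ when $a=c$ and $-1$ when $a\neq c$. For (5), both $NOT_iX_{i-1}$ and $X_{i-1}X_iNOT_i$ send $|abc\rangle$ to $|\bar a\bar bc\rangle$ when $a=c$ and to $|\bar abc\rangle$ when $a\neq c$; the relation for $X_{i+1}$ follows from the symmetric computation with the roles of the two outer qubits exchanged.

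The ``hard part'' here is purely bookkeeping rather than conceptual: one must correctly identify the \emph{correction operators} that appear on the right-hand sides and see why they are exactly what locality forces. In (4) the correction $Z_{i-1}Z_{i+1}$ arises because commuting $Z_i$ past $NOT_i$ changes the sign $(-1)^{x_i}$ precisely on the inputs where $NOT_i$ flips the middle qubit, namely those with $a\neq c$, and $(-1)^{a+c}$ is the unique diagonal operator supported on the outer qubits that detects this condition. In (5) the correction $X_i$ arises because pre-flipping a control qubit turns the condition $a\neq c$ into $\bar a\neq c$, i.e.\ into $a=c$, reversing exactly which inputs have their middle qubit flipped; the extra $X_i$ restores the match. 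Once these two observations are in place, each identity is confirmed by comparing the two branches of the case split, completing the verification of Lemma~\ref{COMMidentities}.
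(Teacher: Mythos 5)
Your verification is correct and complete: the paper states Lemma~\ref{COMMidentities} as a well-known fact and supplies no proof of its own, so your direct check on computational basis vectors is exactly the routine argument the authors leave to the reader. You also correctly isolate and settle the only points where naive ``disjoint support'' reasoning does not apply---the overlapping far-commutation cases $i=j\pm1$ in (4) and $j=i$ in (5), and the three genuinely interacting identities, where your case split on $a=c$ versus $a\neq c$ and the sign bookkeeping via $(-1)^{a+c}$ are all accurate.
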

\bigskip

The $NOT_i$ operators also satisfy the following relations: 

\begin{lemma} \label{NOTidentities}
\begin{enumerate}
\item $ NOT_i^2 = \Id $.
\item $ NOT_i NOT_{i+1} NOT_i = NOT_{i+1} NOT_i NOT_{i+1}. $
\end{enumerate}
\end{lemma}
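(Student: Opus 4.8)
The plan is to treat the two parts separately, reducing each to a finite, purely combinatorial verification.

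For (1), I observe that $NOT_i$ acts only on the three consecutive qubits $(i-1,i,i+1)$ and fixes every other tensor factor, so it suffices to analyze its action on a single basis vector $|abc\rangle$. Since $NOT_i$ leaves the outer qubits $a$ and $c$ untouched and only toggles the middle qubit $b$ precisely when $a\neq c$, applying it twice toggles $b$ either zero times (when $a=c$) or twice (when $a\neq c$); in both cases the value of $b$ is restored and $a,c$ are unchanged. Hence $NOT_i^2=\Id$ on every basis vector, and therefore as an operator.

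For (2), the first step is to note that both $NOT_i$ and $NOT_{i+1}$ act nontrivially only on the four consecutive qubits $(i-1,i,i+1,i+2)$ and as the identity on all remaining factors, so it is enough to verify the identity on $(\qb)^{\otimes 4}$. The key simplification is that, on the sixteen basis vectors $|abcd\rangle$, each operator is realized by an $\mathbb F_2$-linear map of $\{0,1\}^4$: writing the toggle condition additively, $NOT_i$ sends $(a,b,c,d)\mapsto(a,\,a\oplus b\oplus c,\,c,\,d)$ while $NOT_{i+1}$ sends $(a,b,c,d)\mapsto(a,\,b,\,b\oplus c\oplus d,\,d)$. I would record these as two explicit matrices $M_i,M_{i+1}\in GL_4(\mathbb F_2)$. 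Because $NOT_i|v\rangle=|M_i v\rangle$ on basis vectors, composing the permutation operators corresponds exactly to composing these linear maps in the same order, so the desired braid relation reduces to the single matrix identity $M_iM_{i+1}M_i=M_{i+1}M_iM_{i+1}$ over $\mathbb F_2$, which I would confirm by direct multiplication (both products equal the same fixed matrix in $GL_4(\mathbb F_2)$).

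The only delicate point is the bookkeeping in this reduction: I must justify that restricting attention to the four active qubits is legitimate, that each $NOT$ operator really is the basis permutation induced by an honest $\mathbb F_2$-\emph{linear} (not merely affine) map, and that operator composition matches matrix multiplication in the correct order. Once this dictionary is in place, the remaining $4\times 4$ arithmetic over $\mathbb F_2$ is entirely routine, so I expect no genuine obstacle beyond setting up the linearization carefully.
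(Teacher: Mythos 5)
Your proof is correct. One point of comparison is worth noting: the paper offers no proof of this lemma at all --- it is stated as a routine fact (in the same spirit as Lemma \ref{COMMidentities}, which is labeled ``well-known''), so there is no argument of the authors' to match yours against. Your verification is sound: the reduction to the four active qubits is legitimate because both operators act as the identity on every other tensor factor; the toggle rule $b \mapsto a \oplus b \oplus c$ is genuinely linear (not merely affine) over $\mathbb F_2$; and the dictionary $NOT_i|v\rangle = |M_i v\rangle$ does intertwine operator composition with matrix multiplication in the stated order, so the braid relation reduces to $M_iM_{i+1}M_i = M_{i+1}M_iM_{i+1}$, and both sides indeed come out to the map $(a,b,c,d)\mapsto(a,\,a\oplus c\oplus d,\,a\oplus b\oplus d,\,d)$. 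If you want a still slicker route, observe that in the difference coordinates $u_j = x_j \oplus x_{j+1}$ (where $x_1,\ldots,x_{n+1}$ are the qubit values), the operator $NOT_i$ simply transposes $u_{i-1}$ and $u_i$ and fixes all other $u_j$; both relations of the lemma are then the standard relations among adjacent transpositions, and this observation also gives an immediate conceptual proof of Proposition \ref{NOTgroup}, which the paper instead proves by a normal-subgroup argument in $S_n$.
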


A variation of the next proposition features prominently in the characterization of the image of $\rho_R$.   We present it separately, as it may be of independent interest. 

\begin{prop} \label{NOTgroup}
The group $G$ generated by $NOT_2, \ldots, NOT_n$  is isomorphic to the symmetric group $S_n$.  
\end{prop}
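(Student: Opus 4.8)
The plan is to recognize the relations satisfied by the $NOT_i$ as exactly the Coxeter relations of $S_n$, which produces a surjection $S_n \twoheadrightarrow G$, and then to exhibit an explicit action of $G$ that splits this surjection, forcing it to be an isomorphism.

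First I would set $s_k := NOT_{k+1}$ for $k = 1, \ldots, n-1$, so that $G = \langle s_1, \ldots, s_{n-1}\rangle$ has the same number of generators as the standard Coxeter generating set of $S_n$. Lemma \ref{NOTidentities}(1) gives $s_k^2 = \Id$, Lemma \ref{NOTidentities}(2) gives the braid relation $s_k s_{k+1} s_k = s_{k+1} s_k s_{k+1}$, and the remaining relation $[s_k, s_\ell] = \Id$ for $|k - \ell| \geq 2$ (equivalently $[NOT_i, NOT_j] = 0$ for $|i-j| \geq 2$) is checked directly from the definition: when $|i - j| \geq 3$ the two operators act on disjoint qubits, and when $|i-j| = 2$ their supports overlap only in a shared control qubit that neither operator alters. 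Since these are precisely the defining relations of $S_n$, the assignment $s_k \mapsto NOT_{k+1}$ extends to a surjective homomorphism $\psi \colon S_n \twoheadrightarrow G$.

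The heart of the argument is to produce a left inverse to $\psi$. For a computational basis vector $|x_1 x_2 \cdots x_{n+1}\rangle$ I would pass to the ``difference'' coordinates $d_k = x_k \oplus x_{k+1} \in \{0,1\}$ (addition modulo $2$) for $k = 1, \ldots, n$; the map $(x_1, \ldots, x_{n+1}) \mapsto (x_1, d_1, \ldots, d_n)$ is a bijection. A short computation shows that $NOT_i$ fixes $x_1$ and realizes the transposition of the two coordinates $d_{i-1}$ and $d_i$: the control condition $x_{i-1} \neq x_{i+1}$ is exactly $d_{i-1} \neq d_i$, and flipping $x_i$ toggles both $d_{i-1}$ and $d_i$; thus when $d_{i-1} \neq d_i$ the operator interchanges them, while when $d_{i-1} = d_i$ both the swap and the operator act trivially, so in every case $NOT_i$ swaps $d_{i-1}$ and $d_i$. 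Because each $NOT_i$ acts on the $d$-coordinates by an honest coordinate permutation independent of the remaining data, this yields a homomorphism $\phi \colon G \to S_n$ with $\phi(NOT_i) = (i-1,\, i)$, which is surjective since the adjacent transpositions generate $S_n$.

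Composing, $\phi \circ \psi$ sends each Coxeter generator $s_k = (k,\, k+1)$ to $\phi(NOT_{k+1}) = (k,\, k+1) = s_k$, so $\phi \circ \psi = \mathrm{id}_{S_n}$; hence $\psi$ is injective, and being surjective it is an isomorphism $S_n \cong G$. I expect the main obstacle to be exactly this injectivity: the relations alone only identify $G$ as a quotient of $S_n$, and one must rule out the proper quotients (the sign quotient $\mathbb{Z}/2$, the trivial group, and for $n = 4$ the quotient by the Klein four-group). Rather than analyzing the normal subgroups of $S_n$ case by case, the difference-coordinate action supplies a uniform section, simultaneously showing that the $NOT_i$ are nontrivial and distinct and that they generate a group of the full order $n!$.
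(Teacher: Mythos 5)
Your proof is correct, and it takes a genuinely different route from the paper's on the crucial point of injectivity. Both arguments start the same way: the relations of Lemma \ref{NOTidentities} together with far commutativity of the $NOT_i$ yield a surjective homomorphism from $S_n$ onto $G$ (you are in fact more careful here than the paper, since the relation $[NOT_i, NOT_j]=0$ for $|i-j|\ge 2$ is needed for this step but never appears in the paper's lemmas; your verification of it, including the overlapping case $|i-j|=2$ where the shared qubit is a control for both operators, is right). For injectivity, the paper argues by elimination: the kernel is a normal subgroup of $S_n$, which for $n\ge 5$ must be $\{e\}$, $A_n$, or $S_n$ (what is really used is simplicity of $A_n$, not solvability as the paper misstates), and $|G|>2$ rules out the latter two; the cases $n=2,3,4$ are then handled by hand, exhibiting enough distinct elements of $G$ through their action on $|0100\rangle$ and $|0110\rangle$. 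Your difference coordinates $d_k = x_k \oplus x_{k+1}$ replace all of this with a single computation identifying $NOT_i$ as exactly the operator transposing the coordinates $d_{i-1}$ and $d_i$, so that $G$ is the image of the permutation action of $S_n$ on the $d$-coordinates; this produces a left inverse $\phi$ of the surjection that is uniform in $n$, with no case analysis and no structure theory of $S_n$. One small point you should make explicit: for $\phi$ to be well defined, you need that distinct permutations of the $n$ bit-coordinates give distinct operators (faithfulness of the coordinate-permutation action on $\{0,1\}^n$), which is immediate since two distinct permutations differ on some indicator string. With that line added, your argument is complete, and arguably more illuminating than the paper's: it exhibits the $NOT_i$ as adjacent transpositions in disguise, after a relabeling of the computational basis, rather than merely certifying the isomorphism abstractly.
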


\begin{proof}
Let $(i-1, i)$ denote the element in $S_n$ that transposes the $(i-1)$th and $i$th places.   As $S_n$ is generated by such transpositions, we may define a map  $\phi: S_n \to G$ by 
$\phi( (i-1, i)) =NOT_i$ for $2 \leq i \leq n$.

Lemma \ref{NOTidentities} immediately implies that $\phi$ is a surjective homomorphism. To show that $\phi$ is injective, first note that $\ker \phi$ is a normal subgroup of $S_n$. For $n \geq 5$, $S_n$ is solvable.  So $ker(\phi) \in \{\{e\},S_n, A_n\}$.
Since the image of $\phi$ is $G$ and obviously $|G| >2$ for this choice of $n$, $\ker(\phi) = \{ e \}$.  Therefore, $\phi$ is an isomorphism for $n \geq5$.
Of the remaining cases, the one for $n=2$ is obvious, since both $S_n$ and $G$ are isomorphic to $\mathbb Z_2$.  For $n=3$ and $n=4$, we proceed similarly to the argument above for $n \geq 5$, except that we need to find explicit, distinct elements of $G$ to show $|G|>2$ for $n=3$ and $|G|>6$ for $n=4$.   For $n=3$, we check that $NOT_2$, $NOT_3$, and $NOT_2 NOT_3$ are distinct by comparing their actions on the 4-qubit $|0100\rangle$.  For $n=4$,  we need at least seven distinct elements.  We check that $NOT_2$, $NOT_3$, $NOT_4$, $NOT_2 NOT_3$, $NOT_3 NOT_4$, $NOT_3 NOT_2$, $NOT_4 NOT_3$ act distinctly on the 4-qubit $|0110 \rangle$.
\end{proof} \bigskip

\subsubsection{Writing the gYB representation in terms of standard gates}
We express the action of $R$ on $3$-qubits as
\[ R(|abc\rangle ) =  \begin{cases}   \nu \cos(\frac{\pi}{m}) |abc\rangle + i \sin (\frac{\pi}{m}) |a \bar b c \rangle & \mbox{ if } a = c \\  - i \sin (\frac{\pi}{m}) |abc\rangle + \cos (\frac{\pi}{m}) |a \bar b c\rangle & \mbox{ if } a \neq c \end{cases}.\]
Direct computation then shows
\[  R =
  \begin{cases}
   e^{\fractt \; X_2} \cdot Z_1Z_3  \Lambda_{XOR}^2 NOT , & \text{for } m=3 \\
     e^{   \frac{\pi i}{m} Z_1X_2Z_3} \cdot \Lambda_{XOR}^2 NOT, & \text{for } m \geq 5
  \end{cases}.\]

Hence for $2 \leq i \leq n$
$$ \rho_R(\sigma_{i-1}) =
  \begin{cases}
 e^{\frac{2 \pi i }{3} \; X_i} \cdot  Z_{i-1}Z_{i+1}  NOT_i, & \text{for } m=3 \\
      e^{   \frac{\pi i}{m} Z_{i-1}X_i Z_{i+1}} \cdot NOT_i & \text{for } m \geq 5
  \end{cases}.$$

Note that there was an error in \cite{HNW1} for the $m=3$ case.
\\
\bigskip


\subsection{The image of the qubit representation when $m \geq 5$ is odd}

\begin{theorem} \label{TheThm}
For $m\geq  3$ odd, the image of $\rho_R$ is isomorphic to  $\mathbb Z_m^{\frac{n(n-1)}{2}} \rtimes S_n.$
\end{theorem}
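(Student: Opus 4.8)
The plan is to realize $\rho_R(\mathcal B_n)$ as the image of the pure braid group (playing the role of $\mathbb Z_m^{n(n-1)/2}$) extended by $S_n$, and then to show the extension splits. Throughout I write $t_i=\rho_R(\sigma_{i-1})$ for $2\le i\le n$ and factor $t_i=A_i\,NOT_i=NOT_i\,A_i$, where $A_i=e^{\frac{\pi i}{m}Z_{i-1}X_iZ_{i+1}}$ for $m\ge 5$ and $A_i=e^{\frac{2\pi i}{3}X_i}Z_{i-1}Z_{i+1}$ for $m=3$; that $A_i$ and $NOT_i$ commute is read off from Lemma \ref{COMMidentities}(4)--(5), using that $NOT_i$ commutes with $X_i,Z_{i-1},Z_{i+1}$. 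From $(Z_{i-1}X_iZ_{i+1})^2=\Id$ (resp. the analogous $m=3$ computation) one gets that $t_i^2=A_i^2$ has order $m$, that $t_i^{2m}=\Id$, and that $\tau_i:=t_i^m$ is an involution. The guiding exact sequence is $1\to P_n\to\mathcal B_n\to S_n\to 1$; I will identify $K:=\rho_R(P_n)$ with the abelian factor and $\langle\tau_2,\dots,\tau_n\rangle$ with a complementary $S_n$.

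Next I would analyze $K$. Since $P_n$ is generated by the $\binom n2$ pure braid generators $A_{ij}$ $(1\le i<j\le n)$ with $A_{i,i+1}=\sigma_i^2$, the group $K$ is generated by the $\binom n2$ operators $\rho_R(A_{ij})$, and $\rho_R(A_{i,i+1})=t_{i+1}^2=A_{i+1}^2$ has order $m$. The crucial point is that each $NOT_i=\Lambda_{XOR}^2 NOT$ is a linear reversible, hence Clifford, gate, so conjugation by any $\rho_R(w)$ sends Pauli operators to Pauli operators; consequently every $\rho_R(A_{ij})$ equals $e^{\frac{2\pi i}{m}H_{ij}}$ for an explicit Pauli $H_{ij}$ with $H_{ij}^2=\Id$. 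To prove $K$ abelian I would show the $H_{ij}$ pairwise commute, i.e. are pairwise symplectically orthogonal over $\mathbb F_2$; via the conjugation formulas of Lemma \ref{COMMidentities} this reduces to a finite sign count in which any two relevant Pauli words anticommute in an even number of tensor factors (for example, for $n=3$ one finds $H_{13}=Z_1X_2Z_2X_3Z_3Z_4$, which anticommutes with each of $H_2=Z_1X_2Z_3$ and $H_3=Z_2X_3Z_4$ in exactly two factors). Granting commutativity, $K$ is abelian, generated by $\binom n2$ elements of order $m$, so there is a surjection $\mathbb Z_m^{n(n-1)/2}\twoheadrightarrow K$, and in particular $|K|\le m^{n(n-1)/2}$.

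The heart of the argument, and the step I expect to be hardest, is the reverse inequality: that the generators $\rho_R(A_{ij})$ are independent. Because the $H_{ij}$ commute they share an eigenbasis of (stabilizer) vectors $v$, and each such $v$ gives a character $\chi_v$ of $K$ sending $\rho_R(A_{ij})$ to $e^{\frac{2\pi i}{m}s_{ij}(v)}$ with $s_{ij}(v)\in\{\pm1\}$. A relation $\prod_{i<j}\rho_R(A_{ij})^{a_{ij}}=\Id$ holds if and only if $\sum_{i<j}a_{ij}s_{ij}(v)\equiv 0\pmod m$ for every common eigenvector $v$, so independence is equivalent to the $\pm1$ vectors $\bigl(s_{ij}(v)\bigr)_{i<j}$ spanning $(\mathbb Z/m)^{n(n-1)/2}$. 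I would establish this by producing $\binom n2$ explicit common eigenvectors built from computational basis states whose sign vectors form a matrix of full rank modulo $m$; already for $n=3$ all four joint signs of $(H_2,H_3)$ occur, which (since $m$ is odd, so $2$ is invertible) forces $K\cong\mathbb Z_m^3$. Carrying out this rank computation uniformly in $n$ is the main obstacle, precisely because the $H_{ij}$ lie in an isotropic subspace of dimension at most $n+1<\binom n2$ and so are $\mathbb F_2$-dependent: one must exploit their spectral, not their Pauli-theoretic, independence.

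Finally I would assemble the semidirect product. Normality of $K$ in $\rho_R(\mathcal B_n)$ is inherited from normality of $P_n$. For the complement, $\tau_i=t_i^m$ equals $-NOT_i$ when $m\ge 5$ and $Z_{i-1}Z_{i+1}NOT_i$ when $m=3$; in either case Lemmas \ref{COMMidentities} and \ref{NOTidentities} show the $\tau_i$ are involutions satisfying the braid and far-commutativity relations, so $(i-1,i)\mapsto\tau_i$ defines a surjection $S_n\to L:=\langle\tau_2,\dots,\tau_n\rangle$. Injectivity follows exactly as in Proposition \ref{NOTgroup}: the kernel is normal, and a parity argument (a word in the $\tau_i$ equal to the identity has even length, while the accompanying scalar sign is then $+1$, as the $NOT$-part never equals $-\Id$) excludes the nontrivial normal subgroups, giving $L\cong S_n$. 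Moreover $t_i^{m+1}=(t_i^2)^{(m+1)/2}\in K$ and $t_i^m\in L$, so $t_i=t_i^{m+1}(t_i^m)^{-1}\in KL$, whence $\rho_R(\mathcal B_n)=KL$. Since every element of $L$ is a signed permutation matrix while a nontrivial element of $K$ has non-$\pm1$ entries coming from the $X$-factors, $K\cap L=\{\Id\}$. Therefore $\rho_R(\mathcal B_n)=K\rtimes L\cong\mathbb Z_m^{n(n-1)/2}\rtimes S_n$, with $S_n$ permuting the $\binom n2$ cyclic factors as it permutes the pairs $\{i,j\}$; the same outline handles both $m=3$ and $m\ge 5$, the only change being the explicit forms of $A_i$ and $\tau_i$.
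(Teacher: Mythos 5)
Your overall skeleton --- an abelian normal subgroup generated by (conjugates of) squares of the braid generators, complemented by the $S_n$ generated by the involutions $t_i^m$ --- is the same as the paper's decomposition $\Gamma_{skl}\rtimes\Gamma_{not}$, but two steps in your proposal are not actually proved, and one of them is the heart of the theorem. The step you yourself call ``the main obstacle,'' namely independence of the $n(n-1)/2$ generators of $K$, i.e.\ the lower bound $|K|\ge m^{n(n-1)/2}$, is left as a plan rather than an argument. What you do establish yields only a surjection $\mathbb Z_m^{n(n-1)/2}\twoheadrightarrow K$, hence that the image is a quotient of $\mathbb Z_m^{n(n-1)/2}\rtimes S_n$; the theorem is precisely the assertion that no collapse occurs, and that is what is missing. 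Your reduction to sign vectors of common eigenvectors is workable: since $H_2,\dots,H_n$ are commuting, multiplicatively independent Pauli words (each contains the lone $X_j$), every sign pattern $(\eta_2,\dots,\eta_n)\in\{\pm1\}^{n-1}$ occurs in the joint spectrum; the generator indexed by the interval $[k,l]$ acts on such an eigenspace through the character $\prod_{j=k}^{l}\eta_j$, these characters are distinct, and a relation forces the exponent function to be constant modulo $2m$, so summing against any one character gives $2^{n-2}a_{kl}\equiv 0 \pmod m$ and hence $a_{kl}\equiv 0$ because $m$ is odd. But you did not carry out any such computation. The paper avoids this machinery entirely: its generators involve $S_{k,l}=H_k\cdots H_l$, which send $|0\cdots 0\rangle$ to $\pm$ distinct basis states (ones exactly in positions $k$ through $l$), and independence is read off from that action.

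Second, your justification that each $\rho_R(A_{ij})$ equals $e^{\frac{2\pi i}{m}H_{ij}}$ for a Pauli word $H_{ij}$ is a non sequitur. You argue that conjugation by any $\rho_R(w)$ preserves Paulis ``because $NOT_i$ is Clifford,'' but $\rho_R(w)$ is not a word in the $NOT_i$: it also contains the factors $e^{\frac{\pi i}{m}Z_{k-1}X_kZ_{k+1}}$ (resp.\ $e^{\frac{2\pi i}{3}X_k}$ for $m=3$), which are not Clifford gates for any odd $m\ge 3$, and conjugating a Pauli $P$ by $e^{\frac{\pi i}{m}H_k}$ gives a Pauli only when $[P,H_k]=0$ (otherwise it gives $e^{\frac{2\pi i}{m}H_k}P$). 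The conclusion happens to be true, but proving it requires showing inductively that at each stage the $NOT$-conjugated Pauli commutes with the relevant $H_k$ --- your $n=3$ check of $H_{13}$ against $H_3$ is exactly one instance of this --- and that verification is essentially the content of the paper's conjugation identities. The paper sidesteps the issue by never conjugating by full braid images: it first extracts $t_k^m=-NOT_k$ and $t_k^{m+1}=-e^{\frac{\pi i}{m}H_k}$ (and their $m=3$ analogues), and then conjugates the exponentials only by the Clifford elements of $\Gamma_{not}$, which manifestly permute the elements $\pm e^{\frac{\pi i}{m}(-1)^{l-k}S_{k,l}}$ among themselves. Note also that your argument for $K\cap L=\{\Id\}$ leans on knowing what nontrivial elements of $K$ look like, so it too is conditional on the unproved independence step.
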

\bigskip

We prove this theorem in a series of lemmas.  Assume $m$ is odd from now on.   Following \cite{HNW1}, for $ 2 \leq i \leq n$, define
$$ H_i =
  \begin{cases}
   X_i, & \text{for } m=3 \\
     Z_{i-1}Z_{i+1} X_i, & \text{for } m \geq 5  
  \end{cases}.$$\\
For $k \leq l$, define the product of consecutive $H$'s as
\[ S_{k,l} = H_k H_{k+1} \cdots H_{l}. \]
\bigskip

\begin{lemma}
The image of $\rho_R$ is generated by:
\begin{itemize}
\item  (when $m=3$)  $ \{  e^{\fractt(-1)^{l-k} S_{k,l}} \; | \; 2 \leq k \leq l \leq n \} $ and $ \{ Z_{k-1} Z_{k+1}  NOT_k \; | \; 2 \leq k \leq n \}. $
\item (when $m  \geq 5$ odd) $  \{ - e^{\fracpim (-1)^{l-k} S_{k,l}} \; | \; 2 \leq k \leq l \leq n \} $  and $ \{ -NOT_k \; | \; 2 \leq k \leq n \}. $
\end{itemize}
\end{lemma}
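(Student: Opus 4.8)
The plan is to prove a two-way inclusion between the image of $\rho_R$, which is by definition generated by the elements $g_i := \rho_R(\sigma_{i-1})$ for $2 \le i \le n$, and the group generated by the two displayed families. The whole computation rests on one structural observation: in each case $g_i$ factors as the product of its exponential part and its $NOT$-part, and these two factors commute. Writing $g_i = e^{\fracpim H_i}\,NOT_i$ for $m \ge 5$ and $g_i = e^{\fractt X_i}\,Z_{i-1}Z_{i+1}NOT_i$ for $m=3$, the commutation $[NOT_i, H_i]=0$ (resp. $[X_i, Z_{i-1}Z_{i+1}NOT_i]=0$) follows because $NOT_i$ commutes with $Z_{i-1}$, $Z_{i+1}$ and with $X_i$ — this is exactly parts (4) and (5) of Lemma \ref{COMMidentities} applied with the index $i$, which avoids the excluded values $i-1, i+1$. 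Consequently powers of $g_i$ are easy to evaluate.

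First I would extract the single-index ($k=l$) generators as explicit powers of $g_i$. Since $H_i^2 = \Id$ we have $e^{\pi i H_i} = -\Id$, and since $m$ is odd the $NOT_i$-factor survives to an odd power: for $m \ge 5$ this gives $g_i^m = -NOT_i$ and $g_i^{m+1} = -e^{\fracpim H_i}$, while for $m=3$ the parallel computation (using $(Z_{i-1}Z_{i+1}NOT_i)^2 = \Id$) gives $g_i^3 = Z_{i-1}Z_{i+1}NOT_i$ and $g_i^4 = e^{\fractt X_i}$. Thus the $NOT$-type generators and the $k=l$ exponential generators all lie in the image. Conversely, each $g_i$ is visibly the product of the $k=l$ exponential generator and the corresponding $NOT$-type generator, which already gives the inclusion of the image into the group generated by the two families.

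The remaining task, and the main obstacle, is to produce the off-diagonal exponential generators inside the image. I would obtain these by conjugation, proving by induction on $l-k$ the identity
\[ NOT_l \cdots NOT_{k+1}\, H_k\, NOT_{k+1}\cdots NOT_l = (-1)^{l-k} S_{k,l}, \]
together with its $m=3$ analogue in which each $NOT_j$ is replaced by $Z_{j-1}Z_{j+1}NOT_j$. The base case is the key computation $NOT_{j}\,H_{j-1}\,NOT_{j} = -S_{j-1,j}$, where the crucial minus sign arises from the anticommutation $X_j Z_j = -Z_j X_j$ together with the relation $NOT_j X_{j-1} = X_{j-1}X_j NOT_j$ of Lemma \ref{COMMidentities}(5). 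In the inductive step the outermost $NOT_l$ commutes with every factor $H_k, \ldots, H_{l-2}$ (disjoint supports, using Lemma \ref{COMMidentities}(4) to slide past the single overlapping $Z_{l-1}$) and interacts only with $H_{l-1}$, reducing to the base case and contributing exactly one further sign.

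Finally, since the conjugating operators are themselves powers of the $g_j$ (namely $g_j^m = -NOT_j$ for $m \ge 5$ and $g_j^3 = Z_{j-1}Z_{j+1}NOT_j$ for $m=3$), and since conjugation by $-NOT_j$ agrees with conjugation by $NOT_j$, conjugating the $k=l$ generator $-e^{\fracpim H_k}$ (resp. $e^{\fractt X_k}$) by the appropriate consecutive string of these operators lands precisely on $-e^{\fracpim (-1)^{l-k}S_{k,l}}$ (resp. $e^{\fractt(-1)^{l-k}S_{k,l}}$). These elements therefore lie in the image, and combining the two inclusions yields the claimed generating set. I expect the sign bookkeeping in the conjugation induction to be the only delicate point; everything else is routine once the two factors of $g_i$ are seen to commute.
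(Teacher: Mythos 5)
Your proposal is correct and follows essentially the same route as the paper: extract $\rho_R(\sigma_{k-1})^m = -NOT_k$ (resp.\ the cube for $m=3$) and $\rho_R(\sigma_{k-1})^{m+1} = -e^{\fracpim H_k}$ using the commuting factorization, then produce the off-diagonal generators $\pm e^{\fracpim(-1)^{l-k}S_{k,l}}$ by inductive conjugation with $NOT$-type elements, and finish with the obvious reverse inclusion. The only cosmetic difference is that you state the conjugation induction as a single unrolled identity at the level of the Pauli strings $H_k \mapsto (-1)^{l-k}S_{k,l}$ and then exponentiate, whereas the paper performs the identical induction one conjugation at a time directly on the exponentials.
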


\begin{proof}

{\bf Case for $m = 3$:} Recall that $\rho_R(\sigma_{k-1}) =   e^{\fractt X_{k}} Z_{k-1}Z_{k+1}NOT_k $.   It follows from Lemma \ref{COMMidentities} that $Image(\rho_R)$ also contains
\[ ( e^{\fractt X_{k}} Z_{k-1}Z_{k+1}NOT_k  )^3  =Z_{k-1}Z_{k+1}NOT_k  \]
and
\[ ( e^{\fractt X_{k}} Z_{k-1}Z_{k+1}NOT_k  )^4 =  e^{\fractt X_{k}}. \]
Recall $S_{k,l} = X_k X_{k+1} \cdots X_l$.    Induction shows the following is also in $Image(\rho_R)$:
\[ ( Z_l Z_{l+2} NOT_{l+1})( e^{\fractt  (-1)^{l-k} S_{k,l}}) (Z_l Z_{l+2} NOT_{l+1}) =   e^{ \fractt  (-1)^{l+1-k} S_{k,l+1}}. \]
Again Lemma \ref{COMMidentities} is used to rearrange the operators.  Thus all the elements $e^{\fractt (-1)^{l-k} S_{k,l}}$ and $Z_{k-1} Z_{k+1}  NOT_k $ are contained in $Image (\rho_R)$.
Containment in the other way is obvious, because the image of each braid element $\rho(\sigma_{k-1}) =  e^{\frac{2 \pi i}{m} X_k} \cdot Z_{k-1}Z_{k+1}NOT_k$ can be written as a product of  $ e^{\frac{2 \pi i}{m} X_k}$ and $Z_{k-1}Z_{k+1}NOT_k$.
\\

{\bf Case for $m \geq 5$ odd:}  Here  $\rho_R(\sigma_{k-1}) =   e^{\fracpim H_{k}} NOT_k $, where $H_k = Z_{k-1} X_k Z_{k+1}$.  Thus
\[ (e^{\fracpim H_{k}} NOT_k  )^m  = - NOT_k \]
and
\[ (  e^{\fracpim H_{k}} NOT_k)^{m+1} = - e^{\fracpim H_{k}} \]
are also in the image of $\rho_R$.
Moreover, with $S_{k,l} = H_k H_{k+1} \cdots H_l$,
\[ (- NOT_{l+1})( - e^{\fracpim (-1)^{l-k} S_{k,l}}) (- NOT_{l+1}) =  - e^{\fracpim (-1)^{l+1-k} S_{k,l+1}}. \]
Arguing similarly to the $m=3$ case, we see that $Image(\rho_R)$ is generated by all the $ e^{\fracpim (-1)^{l-k} S_{k,l}}$ and $- NOT_k$.
\end{proof} \bigskip


We distinguish between the two kinds of generators.  Define groups $\Gamma_{skl}$ and $\Gamma_{not}$ as follows:
\begin{itemize}
\item (when $m = 3$) \\
$\Gamma_{skl} $ to be the group generated by $\{  e^{\fractt (-1)^{l-k} S_{k,l}} \; | \; 2 \leq k \leq l \leq n  \} $ and \\
  $\Gamma_{not} $ to be the group generated by  $\{ Z_{k-1} Z_{k+1}  NOT_k  \; | \; 2 \leq k \leq n \}.$ \\

\item (when $m \geq 5$ odd) \\
 $\Gamma_{skl} $ to be the group generated by $\{ - e^{\fracpim (-1)^{l-k} S_{k,l}} \; | \; 2 \leq k \leq l \leq n \} $ and \\
 $\Gamma_{not} $ to be the group generated by  $\{ -NOT_k \; | \; 2 \leq k \leq n \}.$  \\
\end{itemize}

\bigskip

\begin{lemma}
The image of $\rho_R$ is a semi-direct product $\Gamma_{skl} \rtimes \Gamma_{not}$.
\end{lemma}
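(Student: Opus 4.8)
The plan is to verify the three conditions of the internal semidirect product criterion: that $\Gamma_{skl}$ is normal in $G := \mathrm{Image}(\rho_R)$, that $\Gamma_{skl}\cap\Gamma_{not}=\{\Id\}$, and that $G=\Gamma_{skl}\,\Gamma_{not}$. The last condition is essentially free: the preceding lemma shows that $G$ is generated by the generators of $\Gamma_{skl}$ together with those of $\Gamma_{not}$, so once $\Gamma_{skl}$ is known to be normal, the set $\Gamma_{skl}\,\Gamma_{not}$ is a subgroup containing every generator of $G$, and hence equals $G$.

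For normality, it suffices to show that conjugating each generator of $\Gamma_{skl}$ by each generator of $\Gamma_{not}$ lands back in $\Gamma_{skl}$ (the generators of $\Gamma_{skl}$ normalize it trivially). Conjugation by $NOT_j$ is an automorphism of the Pauli group, and Lemma \ref{COMMidentities} records its effect on $X_{j\pm1}$, $X_j$, and the $Z$'s; applied to $S_{k,l}=H_k\cdots H_l$ (or $X_k\cdots X_l$ when $m=3$) and using that the $S_{k,l}$ pairwise commute, one checks, case by case according to the position of $j$ relative to $[k,l]$, that $NOT_j\,e^{c S_{k,l}}\,NOT_j$ is again an exponential of a consecutive $S$-product (the scalar and $Z$ prefactors in the $\Gamma_{not}$ generators only contribute signs). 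This generalizes the boundary computation $(-NOT_{l+1})(-e^{\fracpim(-1)^{l-k}S_{k,l}})(-NOT_{l+1})=-e^{\fracpim(-1)^{l+1-k}S_{k,l+1}}$ already carried out in the previous lemma, and constitutes the bulk of the (routine) calculation.

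The main obstacle is the trivial intersection $\Gamma_{skl}\cap\Gamma_{not}=\{\Id\}$, which I would establish by an order argument rather than by comparing matrix entries. Since the $S_{k,l}$ commute, $\Gamma_{skl}$ is abelian, and each of its generators satisfies $(-e^{\fracpim(-1)^{l-k}S_{k,l}})^m=\Id$ (respectively $(e^{\fractt(-1)^{l-k}S_{k,l}})^3=\Id$ when $m=3$); thus $\Gamma_{skl}$ has exponent dividing the odd number $m$, so $|\Gamma_{skl}|$ is odd and in particular $-\Id\notin\Gamma_{skl}$. On the other side, Proposition \ref{NOTgroup}, together with the bookkeeping of the scalar/Pauli prefactors (whose sign is fixed by the parity of the underlying permutation), gives $\Gamma_{not}\cong S_n$. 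Because $\Gamma_{skl}$ is normal in $G$, the intersection $\Gamma_{skl}\cap\Gamma_{not}$ is a normal subgroup of $\Gamma_{not}\cong S_n$ whose order divides $|\Gamma_{skl}|$ and is therefore odd.

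It then remains to recall that the only odd-order normal subgroup of $S_n$ is trivial, with the single exception of $A_3\cong\mathbb Z_3$ when $n=3$. For $n=2$ and for all $n\geq4$ this forces $\Gamma_{skl}\cap\Gamma_{not}=\{\Id\}$ immediately. For $n=3$ I would rule out the remaining possibility directly: a generator of the putative $A_3$, namely the order-three permutation matrix $NOT_2NOT_3$, sends the computational basis vector $|0100\rangle$ to another basis vector, whereas every nontrivial element of $\Gamma_{skl}$ sends some basis vector to a genuine superposition (a single factor $e^{c S_{k,l}}=\cos(\frac{\pi}{m})\Id \pm i\sin(\frac{\pi}{m})S_{k,l}$ already does so, and one checks that no cancellation occurs among the commuting factors). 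Hence $NOT_2NOT_3\notin\Gamma_{skl}$, the intersection is trivial for $n=3$ as well, and the three conditions together yield $G\cong\Gamma_{skl}\rtimes\Gamma_{not}$.
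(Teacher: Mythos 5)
Your proof is correct, and its skeleton matches the paper's where the paper actually gives an argument: both establish normality of $\Gamma_{skl}$ by checking that conjugation by a generator of $\Gamma_{not}$ sends each generator $\pm e^{c S_{k,l}}$ of $\Gamma_{skl}$ to another such generator (the case analysis on the position of $j$ relative to $[k,l]$), and both then assemble the internal semidirect product. The genuine divergence is in the other two conditions. For $G=\Gamma_{skl}\Gamma_{not}$, your remark that normality makes $\Gamma_{skl}\Gamma_{not}$ a subgroup containing every generator is tidier than the paper's appeal to ``a bit more work'' with the same identities. More substantially, the paper merely \emph{asserts} $\Gamma_{skl}\cap\Gamma_{not}=\{e\}$ in one sentence, whereas you prove it: $\Gamma_{skl}$ is abelian of exponent dividing the odd number $m$, hence of odd order, so the intersection is an odd-order normal subgroup of $\Gamma_{not}\cong S_n$, which is trivial except for the possible $A_3$ at $n=3$, and you kill that case by inspection. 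This is the most interesting part of your write-up: it isolates exactly where oddness of $m$ is used and reduces matrix inspection to a single small case, at the price of invoking $\Gamma_{not}\cong S_n$ and the classification of normal subgroups of $S_n$. The implicit alternative (closer in spirit to the paper, and not needing $\Gamma_{not}\cong S_n$ at all) is direct: every element of $\Gamma_{not}$ is a signed permutation matrix, and the only elements of $\Gamma_{skl}$ of that form are $\pm\Id$, with $-\Id$ excluded by the same odd-order observation.

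Two caveats, neither fatal. First, $\Gamma_{not}\cong S_n$ is proved in the paper only in the lemma \emph{following} this one; there is no circularity, since that lemma does not use the semidirect product, but in a self-contained write-up you would either reorder the lemmas or expand your ``bookkeeping of prefactors'' sketch (the forgetful map from signed permutation matrices to permutations restricts to a surjection $\Gamma_{not}\to S_n$ whose kernel is trivial because a word in the generators mapping to the identity permutation has even length, hence trivial sign and $Z$-part). Second, your parenthetical ``no cancellation occurs among the commuting factors'' for $n=3$ is where the real content sits: a general element of $\Gamma_{skl}$ is $\pm\prod e^{\fracpim a_{kl}S_{k,l}}$, and expanding in the basis $\Id, S_{2,2}, S_{2,3}, S_{3,3}$ one sees that the coefficient of $\Id$ has real part $\prod\cos(a_{kl}\pi/m)\neq 0$ (here oddness of $m$ enters again, since $\cos(a\pi/m)$ never vanishes for integer $a$), and vanishing of the three remaining coefficients forces all $\sin(a_{kl}\pi/m)=0$, i.e.\ the element is $\pm\Id$. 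Spelling that out closes the one sketched step and makes your proof fully rigorous---indeed more complete than the paper's own.
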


\begin{proof}
Note that the intersection is  $\Gamma_{skl} \cap \Gamma_{not} = \{ e \}$.
To show that we have a semi-direct product, we need to prove two things.  Firstly, that every element of $Image(\rho_R)$ is a product of an element of $\Gamma_{skl}$ with an element of $\Gamma_{not}$.   And secondly, that conjugation by elements of $\Gamma_{not}$ is an automorphism of $\Gamma_{skl}$.   Both of these can be shown from the following identities:

When $m = 3$, where $S_{k, l} = X_k X_{k+1} \cdots X_l $,
\[ (Z_{j-1} Z_{j+1} NOT_j) S_{k,l} (Z_{j-1} Z_{j+1}NOT_j)
= \begin{cases}
-S_{k-1, l} 		& \mbox{ when }  j=k-1 \\
-S_{k+1, l} 	& \mbox{ when }  j = k \mbox{ and } k < l \\
-S_{k, l-1}		&  \mbox{ when } j = l \mbox{ and } k < l \\
-S_{k, l+1}		& \mbox{ when } j = l + 1\\
S_{k, l}		& \mbox{ otherwise}
\end{cases}
\]
So
\[ (Z_{j-1} Z_{j+1} NOT_j) e^{\fractt (-1)^{l-k} S_{k,l}} (Z_{j-1} Z_{j+1}NOT_j)
= \begin{cases}
e^{\fractt (-1)^{l-(k-1)} S_{k-1,l}} 		& \mbox{ when }  j=k-1\\
e^{\fractt(-1)^{l-(k+1)} S_{k+1,l}}  	&\mbox{ when }    j = k \mbox{ and } k < l \\
e^{\fractt(-1)^{(l-1) -k} S_{k,l-1}} 		& \mbox{ when }  j = l \mbox{ and } k < l \\
e^{\fractt(-1)^{(l+1) -k} S_{k,l+1}} 	& \mbox{ when }  j = l + 1\\
e^{\fractt(-1)^{l-k} S_{k,l}} 	& \mbox{ otherwise}
\end{cases}
\]
In particular, conjugating a generator of $\Gamma_{skl}$ by a generator of $\Gamma_{not}$ is again a generator of $\Gamma_{skl}$.  It immediately follows that conjugation by $\Gamma_{not}$ is an automorphism of $\Gamma_{skl}$.   And with a bit more work, the same identities show that every element of $Image(\rho_R)$ is a product of an element of $\Gamma_{skl}$ with an element of $\Gamma_{not}$. Since $\Gamma_{skl}$ is a normal subgroup, $Image(\rho_R) = \Gamma_{skl} \rtimes \Gamma_{not}.$

When $m \geq5$ odd,  where $H_k = Z_{k-1} X_k Z_{k+1}$ and  $S_{k,l} = H_k H_{k+1} \cdots H_l$, the same identities are true.  Namely,
\[ (-NOT_j) S_{k,l} (-NOT_j)
= \begin{cases}
-S_{k-1, l} 		&  j=k-1\\
-S_{k+1, l} 	&  j = k \mbox{ and } k < l \\
-S_{k, l-1}		&  j = l \mbox{ and } k < l \\
-S_{k, l+1}		&  j = l + 1\\
S_{k, l}		& \mbox{otherwise}
\end{cases}
\]
\[ (- NOT_j)(- e^{i  \fracpim (-1)^{l-k} S_{k,l}} )(-NOT_j)
= \begin{cases}
-e^{\fracpim (-1)^{l-(k-1)} S_{k-1,l}} 		& \mbox{ when }  j=k-1\\
-e^{\fracpim (-1)^{l-(k+1)} S_{k+1,l}}  	&\mbox{ when }    j = k \mbox{ and } k < l \\
-e^{\fracpim (-1)^{(l-1) -k} S_{k,l-1}} 		& \mbox{ when }  j = l \mbox{ and } k < l \\
-e^{\fracpim (-1)^{(l+1) -k} S_{k,l+1}} 	& \mbox{ when }  j = l + 1\\
-e^{\fracpim (-1)^{l-k} S_{k,l}} 	& \mbox{ otherwise}
\end{cases}
\]
So for the same reasons as in the $m=3$ case, $Image(\rho_R) = \Gamma_{skl} \rtimes \Gamma_{not}$ when $m\geq 5$ odd.
\end{proof} \bigskip


\begin{lemma}
$\Gamma_{not}$ is isomorphic to the symmetric group $S_n$. 
\end{lemma}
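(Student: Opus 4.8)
The plan is to realize $\Gamma_{not}$ simultaneously as a quotient of $S_n$ and as a group admitting a surjection \emph{onto} $S_n$, and then to invoke finiteness. The key observation, valid in both cases ($m=3$ and $m\ge 5$ odd), is that each generator of $\Gamma_{not}$ is a monomial matrix with entries in $\{0,\pm 1\}$ whose underlying permutation of the computational basis is exactly $NOT_k$: the diagonal factors $Z_{k-1}Z_{k+1}$ (respectively the central scalar $-1$) only introduce signs and do not permute basis vectors. Forgetting the signs therefore defines a homomorphism $p\colon \Gamma_{not}\to \langle NOT_2,\dots,NOT_n\rangle = G$ sending each generator to the corresponding $NOT_k$. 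Since the $NOT_k$ generate $G$, the map $p$ is surjective, and $G\cong S_n$ by Proposition \ref{NOTgroup}; hence $|\Gamma_{not}|\ge n!$.

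For the reverse inequality I would verify that the generators of $\Gamma_{not}$ satisfy the Coxeter relations of $S_n$ relative to the adjacent transpositions $(k-1,k)$: each generator squares to $\Id$, adjacent generators satisfy the braid relation, and non-adjacent generators commute. Once these hold, the assignment $(k-1,k)\mapsto \text{(generator)}_k$ gives a surjection $S_n\twoheadrightarrow\Gamma_{not}$, so $|\Gamma_{not}|\le n!$. Together with the previous paragraph this forces $|\Gamma_{not}|=n!$ and upgrades $p$ to an isomorphism $\Gamma_{not}\cong S_n$.

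The relation check is immediate when $m\ge 5$: the generators are $-NOT_k$ and $-1$ is central, so $(-NOT_k)^2=NOT_k^2=\Id$ by Lemma \ref{NOTidentities}(1), non-adjacent generators commute because the $NOT_k$ do, and in the braid relation the three sign factors combine to a single $-1$ on each side, reducing it to Lemma \ref{NOTidentities}(2). When $m=3$ the generators are $W_k := Z_{k-1}Z_{k+1}NOT_k$; Lemma \ref{COMMidentities}(2) and (4) show that $NOT_k$ commutes with $Z_{k-1}$ and $Z_{k+1}$ and that $Z_j^2=\Id$, yielding $W_k^2=\Id$, and the same commutation facts together with the far commutativity of the $NOT$'s give $W_kW_j=W_jW_k$ for $|k-j|\ge 2$.

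The one genuinely delicate point — and the step I expect to be the main obstacle — is the braid relation $W_kW_{k+1}W_k=W_{k+1}W_kW_{k+1}$ for $m=3$, since here the $Z$-factors do not commute past the $NOT$'s, and one must confirm that the accumulated signs cancel rather than producing a spurious central $\mathbb Z_2$. I would settle this by restricting to the four active qubits $(k-1,k,k+1,k+2)$ and writing $W_k$ as $\epsilon_k\cdot NOT_k$, where $\epsilon_k=-1$ exactly when qubits $k-1$ and $k+1$ disagree and $\epsilon_k$ is left unchanged by $NOT_k$. Since $NOT_kNOT_{k+1}NOT_k=NOT_{k+1}NOT_kNOT_{k+1}$ by Lemma \ref{NOTidentities}(2), the two sides differ only by the product of the three sign factors each accumulates; tracking these products over the possible values of the four qubits shows that both equal $+1$, so the braid relation holds exactly. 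This produces the surjection $S_n\twoheadrightarrow\Gamma_{not}$ and completes the argument.
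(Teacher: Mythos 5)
Your proof is correct, but its second half takes a genuinely different route from the paper's. Both you and the paper obtain a surjection $S_n \twoheadrightarrow \Gamma_{not}$ by checking the Coxeter relations on the generators; the paper simply cites Lemmas \ref{COMMidentities} and \ref{NOTidentities} for this, whereas your sign-tracking verification of the $m=3$ braid relation $W_kW_{k+1}W_k=W_{k+1}W_kW_{k+1}$ spells out exactly the computation the paper leaves implicit (and it does check out: on every basis state the three accumulated signs on each side multiply to $+1$, so both sides equal $NOT_kNOT_{k+1}NOT_k$ on the nose). Where you diverge is injectivity. The paper reruns the argument of Proposition \ref{NOTgroup}: the kernel is normal in $S_n$, for $n\geq 5$ the only normal subgroups are $\{e\}$, $A_n$, $S_n$, so it suffices to show $|\Gamma_{not}|>2$, and the cases $n=3$ and $n=4$ then require exhibiting, respectively, three and seven explicitly distinct elements acting on particular qubit states. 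You avoid all of this: since $\Gamma_{not}$ consists of signed permutation matrices, forgetting signs is a homomorphism $p\colon \Gamma_{not}\to G$ onto the group $G\cong S_n$ of Proposition \ref{NOTgroup}, giving $|\Gamma_{not}|\geq n!$, while the Coxeter surjection gives $|\Gamma_{not}|\leq n!$, and counting forces both maps to be isomorphisms. Your approach buys uniformity in $n$ --- no small-$n$ casework and no appeal to the normal subgroup structure of $S_n$ --- and it uses Proposition \ref{NOTgroup} as a black box rather than repeating its proof ``with a few minor tweaks'' as the paper does; the paper's approach, in exchange, needs no structural observation about monomial matrices and no counting, only the internal group theory of $S_n$.
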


\begin{proof}
The proof is essentially the same as in Lemma \ref{NOTgroup} with a few minor tweaks.   Specifically, define $\phi:S_n \to \Gamma_{not}$ so that
 \begin{itemize}
 \item (when $m=3$) $\phi( (k-1, k)) = Z_{k-1}Z_{k+1}NOT_k$
 \item (when $m \geq 5$) $\phi( (k-1, k)) =- NOT_k$
 \end{itemize}
 Lemmas \ref{COMMidentities} and \ref{NOTidentities} imply that $\phi$ is a surjective homomorphism.  The proof of injectivity is identical for $n=2$ and $n \geq 5$.  For the case $n=3$:  use   $Z_1 Z_3NOT_2$, $Z_2 Z_4NOT_3$, and $(Z_1Z_3NOT_2)(Z_2Z_4NOT_3)$ for $m =3$  and  use $-NOT_2$, $-NOT_3$, and $(-NOT_2)(-NOT_3)$ for $m \geq 5$, acting on $|0100\rangle$.    Similarly, for $n=4$: use  $Z_1 Z_3 NOT_2$, $Z_2 Z_4NOT_3$, $Z_3Z_5NOT_4$, $(Z_1Z_3NOT_2)(Z_2Z_4NOT_3)$, $(Z_2Z_4NOT_3)(Z_3Z_5NOT_4)$, $(Z_2Z_4NOT_3)(Z_1Z_3NOT_2)$, and $(Z_3Z_5NOT_4)(Z_2Z_4NOT_3)$ for $m =3$ and use $-NOT_2$, $-NOT_3$, $-NOT_4$, $(-NOT_2)(-NOT_3)$, $(-NOT_3)(-NOT_4)$, $(-NOT_3)(-NOT_2)$, $(-NOT_4)(-NOT_3)$ for $m \geq 5$, acting on $|0110 \rangle$.
\end{proof} \bigskip

\begin{lemma}
$\Gamma_{skl}$ is a finite abelian group, isomorphic to the product of $n(n-1)$ copies of $ \mathbb Z_m$.
\end{lemma}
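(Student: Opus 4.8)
The plan is to establish three facts about $\Gamma_{skl}$: that it is abelian, that each of the displayed generators has order exactly $m$, and that those generators satisfy no nontrivial relations over $\mathbb Z_m$. The first two facts already show that $\Gamma_{skl}$ is a finite abelian group of exponent $m$, hence a quotient of $\mathbb Z_m^{N}$, where $N$ is the number of generators; the third upgrades this to an isomorphism $\Gamma_{skl}\cong\mathbb Z_m^{N}$. The number of $\mathbb Z_m$ factors is then read off by enumerating the generating pairs $(k,l)$ with $2\le k\le l\le n$.

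For the first two facts I would argue as follows. The operators $H_2,\dots,H_n$ commute pairwise: when $m=3$ this is clear since $H_i=X_i$, and when $m\ge 5$, with $H_i=Z_{i-1}X_iZ_{i+1}$, two adjacent operators $H_i,H_{i+1}$ overlap in exactly two anticommuting Pauli pairs (so the two signs cancel), while non-adjacent operators overlap in at most a shared $Z$ and hence commute. Therefore each $S_{k,l}=H_k\cdots H_l$ is a product of commuting operators, all the $S_{k,l}$ commute, and so do the exponentials generating $\Gamma_{skl}$; thus $\Gamma_{skl}$ is abelian. Moreover each $H_i^2=\Id$, so $S_{k,l}^2=\Id$ and $S_{k,l}$ is an involution with spectrum $\{\pm1\}$. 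Writing a generator as $-e^{\fracpim(-1)^{l-k}S_{k,l}}$ (or as $e^{\fractt(-1)^{l-k}S_{k,l}}$ when $m=3$) and computing its powers on the $\pm1$-eigenspaces of $S_{k,l}$, one finds, using that $m$ is odd, that the $j$-th power is the identity precisely when $m\mid j$. Hence each generator has order exactly $m$.

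The main obstacle is the third step, proving that there are no nontrivial relations. The difficulty is that the involutions $S_{k,l}$ are themselves dependent — for example $S_{k,l}S_{k,l'}$ agrees with $S_{l+1,l'}$ up to sign — so one must show these mod-$2$ dependencies do not descend to relations among the exponentials. I would diagonalize the commuting family $\{S_{k,l}\}$ simultaneously. Since $H_2,\dots,H_n$ are independent commuting Pauli operators, their joint spectrum realizes every sign pattern $\mu=(\mu_2,\dots,\mu_n)\in\{\pm1\}^{\,n-1}$, and on the corresponding joint eigenspace $S_{k,l}$ acts by $\prod_{i=k}^{l}\mu_i$. Testing a putative relation $\prod g_{k,l}^{\,c_{k,l}}=\Id$ on each eigenspace yields a scalar equation $(-1)^{\sum c_{k,l}}\exp\!\big(\tfrac{\pi i}{m}\,F(\mu)\big)=1$, where $F(\mu)=\sum_{k\le l}(-1)^{l-k}c_{k,l}\prod_{i=k}^{l}\mu_i$.

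This forces $F(\mu)$ to be congruent to a fixed element of $\{0,m\}$ modulo $2m$ for every $\mu$, and in particular $F(\mu)\equiv 0\pmod m$ for all $\mu$. Reducing the coefficients modulo $m$ and regarding the interval products $\mu\mapsto\prod_{i=k}^{l}\mu_i$ as distinct characters of $(\mathbb Z/2)^{\,n-1}$, I would invoke character orthogonality to solve for each coefficient; this inversion is legitimate because $m$ is odd, so $2^{\,n-1}$ is invertible modulo $m$. The conclusion is that every $c_{k,l}\equiv 0\pmod m$, i.e.\ the only relation is trivial, so $\Gamma_{skl}\cong\mathbb Z_m^{N}$. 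It remains to count $N$, the number of pairs $(k,l)$ with $2\le k\le l\le n$, which is the number of $\mathbb Z_m$ factors in $\Gamma_{skl}$. The essential use of the odd-$m$ hypothesis, both in the order computation and in the character inversion, is what I expect to require the most care.
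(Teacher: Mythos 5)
Your proposal is correct, and on the crucial step it is actually more complete than the paper's own proof, which follows the same skeleton (commutativity, each generator has order exactly $m$, no nontrivial relations, count the pairs $(k,l)$) but disposes of the no-relations step in one sentence: the paper asserts that the generators are ``linearly independent,'' as can be seen from their action on the single basis vector $|0 0\cdots 0\rangle$. That single-vector check establishes that the generators are distinct and that no power of one equals a power of another, but it does not by itself exclude a relation involving three or more generators --- which is precisely the subtlety you isolate: the involutions $S_{k,l}$ are multiplicatively dependent (e.g.\ $S_{k,l}S_{k,l'}=S_{l+1,l'}$ for $l<l'$), so when one expands a product $\prod g_{k,l}^{c_{k,l}}$ into Pauli strings, distinct terms can recombine and termwise comparison fails. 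Your resolution --- simultaneously diagonalize the independent commuting Pauli operators $H_2,\dots,H_n$, use the stabilizer-formalism fact that every sign pattern $\mu\in\{\pm1\}^{n-1}$ occurs in the joint spectrum, reduce a putative relation to the scalar identities $(-1)^{\sum c_{k,l}}\exp\bigl(\tfrac{\pi i}{m}F(\mu)\bigr)=1$ with $F(\mu)=\sum_{k\le l}(-1)^{l-k}c_{k,l}\prod_{i=k}^{l}\mu_i$, and then invert the distinct interval characters of $(\mathbb Z/2)^{n-1}$ modulo $m$ (legitimate because $m$ is odd, so $2^{n-1}$ is a unit mod $m$) --- is a complete and correct argument, and it makes explicit exactly where oddness of $m$ is used, both here and in the order computation. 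What the paper's approach buys is brevity; what yours buys is an actual proof of joint independence rather than pairwise independence. One bookkeeping note: your count $N=\#\{(k,l): 2\le k\le l\le n\}=\tfrac{n(n-1)}{2}$ agrees with the paper's own proof and with Theorem \ref{TheThm}, whose conclusion is $\mathbb Z_m^{n(n-1)/2}\rtimes S_n$; the phrase ``$n(n-1)$ copies'' in the lemma statement itself is a typo, and your version is the correct one.
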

\begin{proof}
In both the $m=3$ and $m\geq5$ odd cases, the given generators of $\Gamma_{skl}$ are distinct and no power of one is equal to the power of another.  In fact, the generators form a linearly independent set of $n(n-1)/2$ elements, as can be seen by their action on the qubit $| 0 0 \cdots 0 \rangle$.  Moreover, the generators commute with each other, and each generator has exactly order $m$.   So it must be the abelian product of $n(n-1)/2$ copies of $\mathbb Z_m$.
\end{proof} \bigskip

Putting all the lemmas together proves Theorem \ref{TheThm}, that for $m\geq  3$ odd, $Image(\rho_R) \cong \mathbb Z_m^{\frac{n(n-1)}{2}} \rtimes S_n.$

\end{document}